\newtheorem{thm}{Theorem}[section]
\newtheorem{lem}[thm]{Lemma}
\newtheorem{conj}[thm]{Conjecture}
\newtheorem{cor}[thm]{Corollary}
\newdefinition{rmk}{Remark}[section]
\numberwithin{equation}{section}
\numberwithin{thm}{section}
\begin{document}

\begin{frontmatter}



\title{Zeros of exceptional orthogonal polynomials and the maximum of the modulus of an energy function}


\author{Yu Luo\corref{cor1}}
\ead{luo.yu.68e@st.kyoto-u.ac.jp}
\cortext[cor1]{Corresponding author}

\address{Department of Applied Mathematics and Physics, Graduate School of Informatics, Kyoto University, Yoshida-Honmachi, Sakyo-ku, Kyoto 606-8501, Japan}

\begin{abstract}
We propose a new property of the zeros of exceptional orthogonal polynomials. 
It has been known that exceptional orthogonal polynomials (XOP) have both real and complex zeros. By fixing $m$ variables at the imaginary parts of the complex zeros of XOP, we find that in some cases the modulus of the energy function of a many-particle system attains its maximum at the zeros of XOP. We give a sufficient condition for this result with respect to the denominators of the weight function of XOP.
\end{abstract}

\begin{keyword}
Exceptional orthogonal polynomials \sep Energy function \sep Many-particle system

\MSC[] 33C50 \sep 33E30
\end{keyword}

\end{frontmatter}


\section{Introduction}

\par
Exceptional orthogonal polynomial systems (XOPS) have been extensively studied these years. 
They differ from the classical ones (Hermite, Laguerre and Jacobi) in that there are a finite 
number of degrees do not exist in their degree sequence. The number of the missing degrees 
is called the codimension of the corresponding XOPS. In spite of the absence of degrees XOPS 
form the basis of a weighted Hilbert space, and they are also eigenfunctions of a second-order 
differential operator which has rational instead of polynomial coefficients. The most typical examples 
of XOPS can be found in \cite{key4,key5} (exceptional Hermite polynomials), \cite{key6,key11} 
(exceptional Laguerre polynomials) and \cite{key7,key24} (exceptional Jacobi polynomials).

\par
It has recently been shown that every XOPS can be obtained by applying a finite sequence of 
Darboux transformations to a classical orthogonal polynomial system (COPS) \cite[Theorem 1.2.]{key3}. 
This places on safe ground the constructive approach to a full classification of XOPS. In this 
classification all the XOPS fall into Hermite, Laguerre and Jacobi type, respectively, corresponding to 
the support $I$ and weight function $\hat{\omega}(x)$ shown in table 1,
\begin{table}[hbtp]
\begin{center}
\begin{tabular}{ccc} \hline
XOPS & $\hat{\omega}(x)$ & $I$ \\ \hline
Hermite type  & $e^{-x^2}/\eta_{H}(x)^2$ & $(-\infty,\infty)$ \\ 
Laguerre type & $e^{-x}x^{\alpha}/\eta_{L}(x)^2$ & $(0,\infty)$ \\ 
Jacobi type  & $(1-x)^{\alpha}(1+x)^{\beta}/\eta_{J}(x)^2$ & $(-1,1)$ \\ \hline
\end{tabular}
\caption{Classifiction of XOPS}
\end{center}
\end{table}
where $\eta_{H}$, $\eta_{L}$, $\eta_{J}$ are real-valued polynomials which are non-vanishing on $I$. 
It immediately follows that the XOPS return to the classical one as long as the denominator of $\hat{\omega}(x)$ 
is a constant. 

\par
The zeros of XOPS are divided into two groups: regular zeros which lie in the domain of ort-hogonality, and 
exceptional zeros (usually complex) which lie in the exterior of the domain. A conjecture considering the location 
of zeros of exceptional orthogonal polynomials was drafted as follow:
\begin{conj}[A. B. Kuijlaars and R. Milson, \cite{key8}] \textit{The regular zeros of exceptional orthogonal polynomials have the same asymptotic behavior as the zeros of their classical counterpart. The exceptional zeros converge to the zeros of the denominator polynomial $\eta(x)$}.
\end{conj}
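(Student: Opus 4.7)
The plan is to split the conjecture into its two assertions---regular-zero asymptotics and exceptional-zero convergence---and attack each through the Darboux covering theorem cited as Theorem 1.2 of \cite{key3}. That theorem furnishes a Wronskian (quasi-rational) representation of the form $P_n(x) = \mathrm{Wr}[\psi_1,\ldots,\psi_m,P_n^{(0)}](x)/\eta(x)$, where $P_n^{(0)}$ is a classical orthogonal polynomial of growing degree and $\psi_1,\ldots,\psi_m$ are the seed (quasi-rational) eigenfunctions selected by the Darboux chain. This common formula is the source of information for both halves of the argument.

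For the regular zeros I would proceed by a potential-theoretic route. Since $\eta$ is bounded away from zero on the orthogonality interval $I$, the polynomial $P_n/\eta$ behaves, up to a smooth non-vanishing factor, like the Wronskian of fixed classical eigenfunctions with a classical polynomial of diverging degree. Introducing the normalized counting measure $\nu_n$ of the regular zeros, one would (i) derive uniform $n$-th root estimates for $|P_n|^{1/n}$ on compact subsets of $\mathbb{C}\setminus I$ from the second-order differential equation with rational coefficients, and (ii) compare with the Plancherel--Rotach asymptotics of the unperturbed classical system. Standard logarithmic-potential arguments then force $\nu_n$ to converge weak-$*$ to the classical equilibrium measure of the corresponding Hermite, Laguerre, or Jacobi case.

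For the exceptional zeros the argument is local. Around each zero $x_0$ of $\eta$, expand the Wronskian: as $n\to\infty$ the column containing the derivatives of $P_n^{(0)}$ dominates the others because its entries grow with $n$ after appropriate rescaling. Consequently, in a small fixed disk $D(x_0,\varepsilon)$ with $\varepsilon$ below the minimum separation of the zeros of $\eta$, the numerator of $P_n$ is asymptotically proportional to $\eta(x)$ times a non-vanishing analytic function, so by Rouch\'e's theorem $P_n$ has exactly as many zeros in $D(x_0,\varepsilon)$ as $\eta$ does, and these converge to $x_0$. Summing over all zeros of $\eta$ accounts for all $m$ exceptional zeros.

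The main obstacle is uniformity. One must rigorously exclude both that regular zeros stray into fixed neighborhoods of the zeros of $\eta$ as $n$ grows, and, symmetrically, that exceptional zeros escape to infinity rather than cluster near the zeros of $\eta$. This requires sharp subdominant-minor estimates inside the Wronskian and precise classical asymptotics throughout the complex plane, most naturally produced in the Hermite case by a Riemann--Hilbert steepest-descent analysis of the perturbed weight $e^{-x^2}/\eta_H(x)^2$, and substantially harder in the Laguerre and Jacobi settings because of the endpoint singularities at $x=0$ and $x=\pm 1$ where a model parametrix in terms of Bessel or Jacobi functions must be patched in near the support.
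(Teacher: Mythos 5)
The statement you are trying to prove is labeled a \emph{conjecture} in the paper (attributed to Kuijlaars and Milson), and the paper offers no proof of it; it is quoted only as motivation. So there is no ``paper's own proof'' to compare against, and what you have written should be judged as a stand-alone argument. Judged that way, it is a research program, not a proof. You yourself flag the decisive issue in your last paragraph: the uniformity of the Wronskian minor estimates and the global classical asymptotics are exactly where the difficulty lives, and none of it is carried out. In particular, the Rouch\'e step for the exceptional zeros assumes that the numerator Wronskian is ``asymptotically proportional to $\eta(x)$ times a non-vanishing analytic function'' near each zero of $\eta$ --- but establishing that factorization \emph{is} the second half of the conjecture, so as written the local argument is circular unless you independently prove the dominance of the relevant column with explicit error bounds. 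Similarly, the weak-$*$ convergence of the regular-zero counting measures to the classical equilibrium measure is asserted via ``standard logarithmic-potential arguments,'' but those arguments require an $n$-th root asymptotic for $|P_n|$ that is uniform on compacts of $\mathbb{C}\setminus I$ \emph{and} control that no positive fraction of zeros leaves $I$; neither is derived from the second-order ODE with rational coefficients, whose poles at the zeros of $\eta$ are precisely the obstruction.

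There is also a scope problem. Even in the exceptional Hermite case, Kuijlaars and Milson only prove the convergence of exceptional zeros to the zeros of $\eta_H$ under the additional hypothesis that all zeros are simple (the paper cites this as Theorem 2.3 of \cite{key8}), and the Laguerre and Jacobi cases bring in endpoint singularities that your sketch defers to an unconstructed Riemann--Hilbert parametrix. A correct submission here would either (i) restrict to a case and hypotheses under which the known partial results apply and cite them, or (ii) actually supply the subdominant-minor estimates and the steepest-descent analysis. As it stands, every step that is genuinely hard is replaced by a statement of intent, so the proposal does not close the conjecture.
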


\par
Moreover, properties like the location and asymptotic behavior of zeros of exceptional Hermite polynomials are described by A. B. Kuijlaars and R. Milson \cite{key8}, of exceptional Laguerre and Jacobi polynomials by C. L. Ho, R. Sasaki \cite{key9} and D. G\'omez-Ullate, M. Garc\'ia-Ferrero, R. Milson \cite{key10}. It concludes that the zeros of exceptional orthogonal polynomials usually share similar properties as their classical counterparts, especially for the regular zeros.

\par
Below we revisit an energy problem by making use of properties of exceptional orthogonal polynomials. Considering the maximum of the following energy function
\begin{equation}
T_{\omega}(x_1,\cdots,x_n)=\prod^{n}_{j=1}\omega(x_j)\prod_{1\leq i<j\leq n}|x_i-x_j|^2,
\end{equation}
where the $n$ points $x_1$, $\cdots$, $x_n$ lie on a compact set $E$. 
In the case of $\omega(x)=1$, I. Schur showed that the maximum of $T_{\omega}$ is obtained at the zeros of certain orthogonal polynomials \cite{key22}. If $\omega(x)$ takes a classical weight, namely with Hermite weight $\omega(x)=e^{-x^2}$, with Laguerre weights $\omega_{\alpha}(x)=x^{\alpha}e^{-x}$, with Jacobi weights $\omega_{\alpha,\beta}(x)=(1-x)^{\alpha}(1+x)^{\beta}$, then the maximum of $T_{\omega}$ is attained at the zeros of orthogonal polynomials corresponding to $\omega$, $\omega_{\alpha-1}$, $\omega_{\alpha-1,\beta-1}$, respectively \cite{key17}. Results for the zeros of general orthogonal polynomials can be found in \cite{key23}. In addition, \'A. P. Horv$\acute{a}$th proved that the set of regular zeros of exceptional Hermite polynomials is the solution of the maximum problem with respect to the weight $\hat{\omega}(x)P^{2}_{m}(x)$, where $\hat{\omega}(x)$ is the weight of exceptional Hermite polynomials, $P_{m}(x)$ is a polynomial whose zeros are the exceptional zeros of an exceptional Hermite polynomial of codimension $m$ \cite{key19}. Similar results have also been reported in the cases of the so-called $X_m$-Laguerre polynomials and $X_m$-Jacobi polynomials \cite{key18}.

\begin{rmk}
As is pointed out in \cite{key23}, $T_{\omega}$ is called an energy function in light of its potential theoretic background. In fact, taking the logarithm in $(1.1)$, the maximization problem of $(1.1)$ is equivalent to the minimization problem of the following function
$$
-\text{log}\big(T_{\omega}\big)=\sum^{n}_{j=1}\text{log}\frac{1}{\omega(x_i)}+\sum_{1\leq i<j\leq n}\text{log}\frac{1}{|x_i-x_j|^2}.
$$
The second summation in the right hand side can be interpreted as the energy of a system of $n$ like-charged particles located at the points $\{x_i\}^{n}_{i=1}$, where the repelling force between two particles is proportional to the reciprocal of the square of the distance between them. The first summation refers to the total external potential of this system. Thus, $
-\text{log}\big(T_{\omega}\big)$ is the total energy of this n-particle system.
\end{rmk}
\par
In this paper we investigate the maximum of the energy function $(1.1)$ with respect to $\omega(x)=\hat{\omega}(x)p(x)$, where $p(x)$ is the coefficient of the following second-order differential equation satisfied by the exceptional orthogonal polynomials with respect to $\hat{\omega}(x)$
\begin{equation}
p(x)y''(x)+q(x)y'(x)+r(x)y(x)=\lambda y(x),
\end{equation}
the prime denotes derivative with respect to $x$, $y'(x)=dy(x)/dx$. Note that $p(x)$, $q(x)$, $r(x)$ are rational functions satisfying the following conditions:
\begin{equation}
\text{deg}(p(x))\leq 2,\hspace{2mm} \text{deg}(q(x))\leq 1,\hspace{2mm} \text{deg}(r(x))\leq 0,
\end{equation}
only in the case when the solutions of $(1.2)$ are classical orthogonal polynomials, $p(x)$, $q(x)$, $r(x)$ return to polynomials. Here the degree of a rational function $f(x)=P(x)/Q(x)$, where $P(x)$ and $Q(x)$ are both polynomials, is given by
$$
\text{deg}(f(x))=\text{deg}(P(x))-\text{deg}(Q(x)).
$$

\par
In the next section, we give a brief introduction on the definition and properties of XOPS, and the so-called Stieltjes-Calogero type relations of the zeros of polynomial solutions of any linear second order differential equations. In particular, we derive the Stieltjes-Calogero type relations of the zeros of XOPS which can be used to prove our main results.
In section 3, some examples on the observations of the relationship between the zeros of certain XOPS and the energy function of an electrostatic model is provided. In section 4, we prove our main results, which provide as a sufficient condition for the modulus of an energy function to attain its maximum at the zeros of XOPS. Finally, section 5 concludes this paper.

\section{Preliminaries}
\par
Our main results in section 4 will be proved using the properties such as second-order differential equations of exceptional orthogonal polynomials and the Stieltjes-Calogero type relations. To this end we introduce some basic knowledge about these properties.

\subsection{Exceptional Orthogonal Polynomials}

\par
A sequence of polynomials $\{P_{n}(x)\}_{n\in\mathbb{N}}$ is called an orthogonal polynomial sequence if it satisfies
$$
\int_{I}P_{m}(x)P_{n}(x)\omega(x)dx=h_{n}\delta_{mn},\hspace{2mm} \text{deg}(P_{n}(x))=n,
$$
where $\omega(x)$ is the weight function, $I$ is called the interval of orthogonality and $\delta_{mn}$ is Kronecker's delta. 
If $h_n=1$, then the polynomials $\{P_{n}(x)\}_{n\in\mathbb{N}}$ are orthonormal. Note that the weight function $\omega(x)$ should be continuous and positive on $I$ such that the moments exist.
$$\mu_n:=\int_{I}x^{n}\omega(x)dx, \hspace{2mm} n=0,1,2,\cdots$$

\par
The most extensively studied families of orthogonal polynomials are the classical orthogonal polynomials (named by Hermite, Laguerre and Jacobi), which are the eigenfunctions of certain second-order linear differential operators. The eigenvalue equation, which takes the form of $(1.2)$, can be rewritten as the well known Sturm-Liouville type equation
$$
(P(x)y'(x))'+R(x)y(x)=\lambda\omega(x)y(x),
$$
where $P(x)=\omega(x)p(x)$, $R(x)=\omega(x)r(x)$. The weight function $\omega(x)$ satisfies the Pearson equation
\begin{equation}
\big(p(x)\omega(x)\big)'=q(x)\omega(x)
\end{equation}
and the conditions
\begin{equation}
p(x)\omega(x)x^k=0,\hspace{2mm} k\in\mathbb{N}
\end{equation}
on the boundary of the interval $I$.

\par
The exceptional orthogonal polynomial system generalizes classical orthogonal polynomial system in that it allows gaps in the polynomial sequence while preserving as eigenfunctions of a Sturm-Liouville problem \cite{key3}. Which means the exceptional weights $\hat{\omega}(x)$'s still satisfy the Pearson equation $(2.1)$ and the boundary conditions $(2.2)$. As a result of the missing degrees, coefficients of the second-order differential equation $(1.2)$ appear to be rational functions, which implies the existence of poles. Specifically, the second-order differential equations whose solutions are the three types exceptional orthogonal polynomials describe as \cite{key5,key6,key7}
\begin{equation}
H''_{n}(x)-2\bigg(x+\frac{\eta'_{H}(x)}{\eta_{H}(x)}\bigg)H'_{n}(x)+\bigg(\frac{\eta''_{H}(x)}{\eta_{H}(x)}+2x\frac{\eta'_{H}(x)}{\eta_{H}(x)}+2n-2k-2u^{H}_{\mathcal{F}}\bigg)H_{n}(x)=0,
\end{equation}

\begin{equation}
\begin{split}
& xL''_{n}(x)+\bigg(\alpha+k'+1-x-2x\frac{\eta'_{L}(x)}{\eta_{L}(x)}\bigg)L'_{n}(x) \\
& +\bigg(x\frac{\eta''_{L}(x)}{\eta_{L}(x)}+(x-\alpha-k')\frac{\eta'_{L}(x)}{\eta_{L}(x)}+n-k_{1}-u^{L}_{\mathcal{F}}\bigg)L_{n}(x)=0, \quad \alpha>-1, k'>0,
\end{split}
\end{equation}

\begin{equation}
\begin{split}
& (1-x^2)P''_{n}(x)+\bigg(\beta-\alpha-2k'_{2}-(\alpha+\beta+2k'_{1}+2)x-2(1-x^2)\frac{\eta'_{J}(x)}{\eta_{J}(x)}\bigg)P'_{n}(x) \\
& +\bigg((1-x^2)\frac{\eta''_{L}(x)}{\eta_{L}(x)}+\big[\alpha-\beta+2k'_{2}+(2k'_{1}+\alpha+\beta)x\big]\frac{\eta'_{J}(x)}{\eta_{J}(x)}+\lambda(n-u^{J}_{\mathcal{F}})-\lambda(k'_1)\bigg)P_{n}(x)=0, \\
& \alpha,\beta>-1, k'_1+k'_2>0,
\end{split} 
\end{equation}
where $H_{n}(x)$, $L_{n}(x)$, $P_{n}(x)$ denote exceptional Hermite, Laguerre, Jacobi polynomials of degree $n$, respectively. $\eta_{H}(x)$, $\eta_{L}(x)$, $\eta_{J}(x)$ are polynomials whose degrees coincide with the codimension of the related exceptional orthogonal polynomial systems, $\alpha$, $\beta$, $k$, $k'$, $k_1$, $k'_1$, $k'_2$, $u^{H}_{\mathcal{F}}$, $u^{L}_{\mathcal{F}}$, $u^{J}_{\mathcal{F}}$ are certain constants and $\lambda(x)$ is a real-valued function, we shall omit the details about these functions and constants in this paper for the convenience of discussion.

\subsection{Stieltjes-Calogero type relations}

\par
There are many literatures considering the Stieltjes-Calogero type relations for zeros of orthogonal polynomials, the most famous result among which was obtained by T. J. Stieltjes\cite{key1} as follow
$$
\sum^{n}_{k=1,k\neq j}\frac{1}{x_j-x_k}=x_j,
$$
where $x_1$, $x_2$, $\cdots$, $x_n$ are zeros of Hermite polynomial of degree $n$. Stieltjes noted that this result implies an appealing interpretation of the location of zeros of Hermite polynomials as equilibrium positions of a simple one-dimensional n-particle problem. Moreover, He obtained similar relations for zeros of Laguerre and Jacobi polynomials thereafter. Interest in such kind of relations was revived by the work of Calogero and co-workers on integrable many-body systems\cite{key12,key14,key15}. Since then substantial efforts have been made on finding the Stieltjes-Calogero type relations for the purpose of revealing the relationship between zeros of polynomial systems and certain many-body systems.

\par
To the best of the author's knowledge, the existing most generic method of obtaining this kind of relations was described in \cite{key20}. We apply this method to give some nontrivial results in the proceeding part.
Let
$$S_{m,j}:=\sum^{n}_{k=1,k\neq j}\frac{1}{(x_j-x_k)^m},$$
if it satisfies that
$$\sum^{n}_{k=1,k\neq j}\frac{1}{(x_j-x_k)^m}=f(x_j),$$
where $f(x_j)$ is a rational function about $x_j$, then the above formula is called a Stieltjes-Calogero type relation.

\par
Consider an $n$-th order differential equation，
\begin{equation}
\sum^{n}_{i=0}A_{i}(x)y^{(n-i)}(x)=f(x),
\end{equation}
where $A_i(x)$ and $f(x)$ belong to $C^{\infty}(-\infty,\infty)$．
Suppose that $(2.6)$ has a monic polynomial solution $y(x)$ with simple roots:
$$y(x)=\prod^{n}_{i=1}(x-x_i),$$
then let $y_j(x)$ be defined as $y(x)=(x-x_j)y_j(x)$, i.e.
$$y_j(x)=\prod^{n}_{i=1,i\neq j}(x-x_i).$$
It follows that
$$y^{(r)}(x_j)=ry^{(r-1)}_{j}(x_j), \hspace{2mm} r\geq 1,$$
so that $(2.6)$ becomes, after division by $y'(x)$ and evaluation at $x=x_j$，
\begin{equation}
\sum^{n-1}_{i=0}(n-i)A_{i}(x_j)\frac{y^{(n-i-1)}_{j}(x_j)}{y_j(x_j)}=\frac{f(x_j)}{y'(x_j)}.
\end{equation}
$S_{1,j}$ can easily be obtained by observing the right hand side of the following formula 
$$S_{1,j}=\frac{y'_j(x)}{y_{j}(x)}\big|_{x=x_j}$$
thus the other terms immediately follow by differentiating at $x=x_j$
$$\bigg(\frac{y'_j(x)}{y_{j}(x)}\bigg)^{(s)}\big|_{x=x_j}=(-1)^{s}s!S_{s+1,j}, \hspace{2mm} s=0,1,2,\cdots.$$
In light of the above formula $S_{r,j}$($r=2,3,\cdots$) can be derived by analyzing a new function $Z_r(x)$
$$Z_r(x):=\frac{y^{(r)}_{j}(x)}{y_{j}(x)}$$
where $Z_r(x)$ satisfies a recurrence relation
$$Z_{r+1}(x)=Z'_{r}(x)+Z_{1}(x)Z_{r}(x),$$
and the initial condition
$$Z_{1}(x_j)=S_{1,j}.$$

Immediately we can rewrite $(2.7)$ as
\begin{equation}
\sum^{n-1}_{i=0}(n-i)A_{i}(x_j)Z_{n-i-1}(x_j)=\frac{f(x_j)}{y'(x_j)}.
\end{equation}

In the case of exceptional orthogonal polynomials, a second-order differential equation with rational coefficients in the shape of $(1.2)$ was satisfied, one can easily obtain
\begin{eqnarray}
S_{1,j} &=& -\frac{q(x_j)}{2p(x_j)}, \\
S_{2,j} &=& \frac{2[p'(x_j)+q(x_j)]S_{1,j}+[q'(x_j)+r(x_j)]}{3p(x_j)}+S^2_{1,j}, \\
S_{3,j} &=& -\frac{1}{8p(x_j)}\bigg\{3[2p'(x_j)+q(x_j)][S^{2}_{1,j}-S_{2,j}]+\nonumber  \\
& & 2[p''(x_j)+2q'(x_j)+r(x_j)]S_{1,j}\bigg\}+\frac{3}{2}S_{1,j}S_{2,j}-\frac{1}{2}S^{3}_{1,j},
\end{eqnarray}
and $S_{4,j}$, $S_{5,j}$, $\cdots$, by inductively computing $Z_{r}(x)$, $r=2,3,\cdots$, and differentiating on $(1.2)$.

\par
Making use of the above method, we obtain the following properties on the zeros of exceptional orthogonal polynomials according to the second-order differential equations $(2.3)$, $(2.4)$ and $(2.5)$.

Let $x_1$, $\cdots$, $x_n$ denote the $n$ zeros of the exceptional Hermite polynomial of degree $n$, then the Stieltjes-Calogero type relations of $x_1$, $\cdots$, $x_n$ follow
\begin{eqnarray*}
S_{1,j} &=& x_j+\frac{\eta'_{H}(x_j)}{\eta_{H}(x_j)}, \\
S_{2,j} &=& \frac{2}{3}(n-1-k-u^{H}_{\mathcal{F}})-\frac{1}{3}\bigg[x^2_j+\frac{\eta''_{H}(x_j)}{\eta_{H}(x_j)}-\bigg(\frac{\eta'_{H}(x_j)}{\eta_{H}(x_j)}\bigg)^2\bigg], \\
S_{3,j} &=& \frac{1}{2}x_j.
\end{eqnarray*}

Let $x_1$, $\cdots$, $x_n$ denote the $n$ zeros of the exceptional Laguerre polynomial of degree $n$, then the Stieltjes-Calogero type relations of $x_1$, $\cdots$, $x_n$ follow
\begin{eqnarray*}
S_{1,j} &=& -\frac{\alpha+1+k'-x_j}{2x_j}+\frac{\eta'_{L}(x_j)}{\eta_{L}(x_j)}, \\
S_{2,j} &=& -\frac{1}{12}\bigg\{\frac{(\alpha+1+k')(\alpha+5+k')}{x^2_j}-\frac{2(2n+\alpha+1+k'-2k'_1-2u^{L}_{\mathcal{F}}+2\frac{\eta'_{L}(x_j)}{\eta_{L}(x_j)})}{x_j} \\
& & \quad\quad\quad +1+4\frac{\eta''_{L}(x_j)}{\eta_{L}(x_j)}-4\bigg(\frac{\eta'_{L}(x_j)}{\eta_{L}(x_j)}\bigg)^2\bigg\}.
\end{eqnarray*}

Let $x_1$, $\cdots$, $x_n$ denote the $n$ zeros of the exceptional Jacobi polynomial of degree $n$, then the Stieltjes-Calogero type relations of $x_1$, $\cdots$, $x_n$ follow
\begin{eqnarray*}
S_{1,j} &=& -\frac{\alpha-\beta+2k'_2+(\alpha+\beta+2+2k'_1)x_j}{2(1-x^2_j)}+\frac{\eta'_{J}(x_j)}{\eta_{J}(x_j)}. 
\end{eqnarray*}

\begin{rmk}
Only the first several terms of these relations are listed here, the other terms, which tend to be more complicated (although some special terms may have elegant forms like $S_{3,j}$ for the zeros of exceptional Hermite polynomials), can be easily computed using this method. Notice that in the case of classical orthogonal polynomials all the terms containing $k$, $k'$, $k_1$, $k'_1$, $k'_2$, $u^{a}_{\mathcal{F}}$, $\eta'_{a}(x_j)/\eta_{a}(x_j)$ and $\eta''_{a}(x_j)/\eta_{a}(x_j)$ $(a=H,L,J)$ disappear automatically. 
\end{rmk}

\section{Examples}

\par
In this section we provide some examples which give evidence for our main result considering the case of exceptional Hermite polynomials. The exceptional Hermite polynomials are defined upon Wronskian determinants whose entries are Hermite polynomials according to a double partition \cite{key4}. Let $\lambda=(\lambda_1,\cdots,\lambda_r)$ be a non-decreasing sequence of non-negative integers
$$
0\leq \lambda_1\leq\lambda_2\leq\cdots\leq\lambda_r,	
$$
we call $\lambda$ a double (or even) partition if $r$ is even and $\lambda_{2i-1}=\lambda_{2i}$, $i=1,\cdots,r/2$. 
The exceptional Hermite polynomials with respect to $\lambda$ are defined as
$$
H^{(\lambda)}_{n}=\text{Wr}[H_{\lambda_1},H_{\lambda_2+1},\cdots,H_{\lambda_{r}+r-1},H_{n-|\lambda|+r}], \quad n-|\lambda|+r\in\mathbb{N}\backslash \{\lambda_1,\lambda_2+1,\cdots,\lambda_r+r-1\},
$$
where \text{Wr} denotes the Wronskian determinant, $H_{j}$ is the $j$th Hermite polynomial and $|\lambda|=\sum^{r}_{i}\lambda_{i}$. 
From this definition it is clear that deg$H^{(\lambda)}_{n}(z)=n$.
Recall from table 1 the weight function of exceptional Hermite polynomials is $\hat{\omega}_{H}(z)=e^{-z^2}/\eta^{2}_{H}(z)$,
where we can now give $\eta_{H}$ as
$$
\eta_{H}:=\eta^{(\lambda)}_{H}=\text{Wr}[H_{\lambda_1},H_{\lambda_2+1},\cdots,H_{\lambda_{r}+r-1}].
$$
It is known that $\eta_{H}$ has no zeros on the real line when $\lambda$ is a double partition \cite{key4}, hence $\hat{\omega}_{H}$ is a well-defined weight function on the real line. Since deg$\eta_{H}=|\lambda|$, $\eta_{H}$ has $|\lambda|$ complex zeros. According to theorem 2.3 of \cite{key8}, if all the zeors of $H^{(\lambda)}_{n}$ are simple then the exceptional (complex) zeros converge to the zeros of $\eta_{H}$.

\par
The problem described in the introduction is to find the maximum value of
$$
T_{\omega}(x_1,\cdots,x_n)=\prod^{n}_{j=1}\omega(x_j)\prod_{1\leq i<j\leq n}|x_i-x_j|^2,
$$
where $\omega=\hat{\omega}p$, specifically $\omega(x)=e^{-x^2}/\eta^2_H(x)$ in the current case. Let $Z=\{z_1,\cdots,z_n\}$ be the set of zeros of $H^{(\lambda)}_{n}(z)$.
In order to check whether $|T_{\omega}|$ has a maximum value at $Z$ or not, define
$$f(z)=\bigg|\frac{T_{\omega}(z_1+z,\cdots,z_n+z)}{T_{\omega}(z_1,\cdots,z_n)}\bigg|,$$
for different partition $\lambda$ we observe the value of $f(z)$ around $z=0$.

\paragraph{Example 1}
When $\lambda=(1,1,1,1)$, $\eta_{H}=\text{Wr}[H_1,H_2,H_3,H_4]$, the associated exceptional Hermite polynomials are
$$
H^{(\lambda)}_{n}=\text{Wr}[H_1,H_2,H_3,H_4,H_n], \quad n\notin\{1,2,3,4\}.
$$ 
Let $n=8$, then $H^{(\lambda)}_{n}(z)$ has 4 complex zeros and 4 real zeros, $Z=\{z_1,\cdots,z_8\}$. Numerical results show that $z=0$ is a saddle point of $f(z)$ when $z\in\mathbb{C}$ (since $T_{\omega}(z_1+z,\cdots,z_n+z)$ is a holomorphic function, according to the maximum modulus principle the modulus $|T_{\omega}(z_1+z,\cdots,z_n+z)|$ cannot exhibit a true local maximum within the domain). Nevertheless, if $z\in\mathbb{R}$, $f(z)$ attains its maximum at $z=0$.

\paragraph{Example 2}
For $\lambda=(1,1,3,3)$, $\eta_{H}=\text{Wr}[H_1,H_2,H_5,H_6]$, the associated exceptional Hermite polynomials are
$$
H^{(\lambda)}_{n}=\text{Wr}[H_1,H_2,H_5,H_6,H_{n-4}], \quad n-4\notin\{1,2,5,6\}.
$$ 
Let $n=8$, then $H^{(\lambda)}_{n}(z)$ has 6 complex zeros and 2 real zeros, $Z=\{z_1,\cdots,z_{8}\}$. Again, it follows numerically that $z=0$ is a saddle point of $f(z)$ when $z\in\mathbb{C}$ and a maximum point of $f(z)$ if $z\in\mathbb{R}$.

\begin{rmk}
Example 1 and example 2 show that in some cases $Z$ is a saddle point of $|T_{\omega}|$ while at the same time a maximum point of $|T_{\omega}|$ if all the imaginary parts of $z_i$'s are fixed. However, this phenomenon does not arise for all cases. 
\end{rmk}

\paragraph{Example 3}
For $\lambda=(2,2,3,3)$, $\eta_{H}=\text{Wr}[H_2,H_3,H_5,H_6]$, the associated exceptional Hermite polynomials are
$$
H^{(\lambda)}_{n}=\text{Wr}[H_2,H_3,H_5,H_6,H_{n-6}], \quad n\notin\{2,3,5,6\}.
$$ 
Let $n=10$, then $H^{(\lambda)}_{n}(z)$ has 8 complex zeros and 2 real zeros, $Z=\{z_1,\cdots,z_{10}\}$. In this case one can observe from the numerical simulation of $f(z)$ that $z=0$ is neither a maximum point nor a saddle point of $f(z)$, hence $|T_{\omega}|$ has no maximum at $Z$.

\section{Main results}
\par
The examples in section 3 indicate that in some special cases the modulus of the energy function $T_{\omega}$ attains its maximum at the zeros of a XOP.
In this section we investigate under what kind of conditions it leads to these special cases, 
the main results are concluded as Theorem 4.4. Before proving the main theorems we will give several lemmas which consider the positive definiteness of a matrix and the uniqueness of the maximum point of $T_{\omega}$.

\begin{lem}
An Hermitian strictly diagonally dominant matrix with real positive diagonal entries is positive definite.
\end{lem}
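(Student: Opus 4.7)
The plan is to prove this via the Gershgorin circle theorem combined with the fact that Hermitian matrices have real spectrum. Let $A = (a_{ij})$ be the matrix in question, and recall that strict diagonal dominance means $a_{ii} > R_i := \sum_{j \neq i} |a_{ij}|$ for every $i$ (here I used that $a_{ii}$ is real and positive to drop the absolute value on the left).

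First I would note that since $A$ is Hermitian, every eigenvalue $\lambda$ of $A$ is real. Next, by the Gershgorin disc theorem, any such $\lambda$ must lie in at least one of the discs $D_i = \{z \in \mathbb{C} : |z - a_{ii}| \leq R_i\}$. Intersecting $D_i$ with the real line gives the interval $[a_{ii} - R_i,\, a_{ii} + R_i]$, and by the strict diagonal dominance hypothesis together with $a_{ii} > 0$ we have $a_{ii} - R_i > 0$. Hence every real number in $D_i$ is strictly positive, and so $\lambda > 0$. Since all eigenvalues of the Hermitian matrix $A$ are positive, $A$ is positive definite.

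There is no real obstacle here; the result is classical and the only subtlety is using Hermiticity twice, once to make sense of comparing $a_{ii}$ with $R_i$ as real numbers (the diagonal of a Hermitian matrix is real) and once to restrict the spectrum to $\mathbb{R}$ before intersecting with the Gershgorin discs. If a more self-contained exposition is preferred, one may replace the appeal to Gershgorin by a direct argument: for any nonzero $x \in \mathbb{C}^n$, expand $x^{*}Ax = \sum_i a_{ii}|x_i|^2 + \sum_{i \neq j} a_{ij}\bar{x}_i x_j$, bound the off-diagonal sum using $|\bar{x}_i x_j| \leq \tfrac{1}{2}(|x_i|^2 + |x_j|^2)$, and use $a_{ii} - R_i > 0$ to conclude $x^{*}Ax > 0$. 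Either route delivers the lemma in a few lines.
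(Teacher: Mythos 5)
Your proof is correct and follows essentially the same route as the paper, which also deduces positivity of the (real) spectrum from the Gershgorin circle theorem and strict diagonal dominance. The extra detail you supply --- intersecting the discs with the real line and the optional direct quadratic-form argument --- only makes explicit what the paper leaves implicit.
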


\begin{proof}
Let $A$ denote an Hermitian strictly diagonally dominant matrix with real positive diagonal entries, then it follows from the Gershgorin circle theorem that all the eigenvalues of $A$ are positive, which implies that $A$ is positive definite.
\end{proof}

\begin{lem}[Uniqueness of the maximum point of $T_{\omega}$] Let $\omega$ be a non-nagative, continuous weight on $I\subset\mathbb{R}$ such that $\text{log}\omega$ is concave, i.e., $(\text{log}\omega(x))''\leq 0$, $\forall x\in I$, then the maximum point of $T_{\omega}$ is unique.

\end{lem}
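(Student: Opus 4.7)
The plan is to move to the additive energy $F := \log T_\omega$ and exploit its concavity. By symmetry of $T_\omega$ in its arguments it suffices to treat maximizers on the open ordered chamber $D = \{x \in I^n : x_1 < x_2 < \cdots < x_n\}$; there
\[
F(x) = \sum_{j=1}^{n}\log\omega(x_j) + 2\sum_{1\le i<j\le n}\log(x_j-x_i).
\]
First I would compute the Hessian $H(x)$ of $F$. The weight part contributes a diagonal matrix with entries $(\log\omega)''(x_j)\le 0$ by hypothesis. The interaction part has $H^{\mathrm{int}}_{ii} = -2\sum_{k\ne i}(x_i-x_k)^{-2}$ and $H^{\mathrm{int}}_{ij} = 2(x_i-x_j)^{-2}$; a routine symmetrization yields
\[
v^{\top}H^{\mathrm{int}}(x)\,v \;=\; -2\sum_{1\le i<j\le n}\frac{(v_i-v_j)^{2}}{(x_i-x_j)^{2}} \;\le\; 0,
\]
with equality exactly when $v \in \operatorname{span}(\mathbf{1})$. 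Consequently $H(x)$ is negative semidefinite on all of $D$, so $F$ is concave there and the set of its maximizers is a convex subset of $D$.

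Next I would show this convex set is a single point. Suppose for contradiction that $P,Q\in D$ are two distinct maximizers and put $v := Q-P$. Because the coordinates of both $P$ and $Q$ are strictly ordered, the linear interpolation $\gamma(t) = P + tv$ stays in $D$ (any linear function positive at both endpoints of an interval is positive throughout), and concavity combined with $F(P)=F(Q)$ forces $F\circ\gamma$ to be constant on $[0,1]$. Hence $(F\circ\gamma)''(t)\equiv 0$, and since both non-positive terms of the resulting quadratic form must vanish simultaneously, the interaction term forces $v = c\mathbf{1}$ with $c\neq 0$, while the weight term forces $(\log\omega)''(x_j + tc) = 0$ for every $j$ and every $t\in[0,1]$. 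Writing the first-order optimality conditions $\nabla F(P) = \nabla F(Q) = 0$ component-wise and subtracting cancels the interaction contributions (which depend only on the translation-invariant differences $x_i-x_j$) and leaves $(\log\omega)'(x_i + c) = (\log\omega)'(x_i)$ for each $i$. Together with the boundary decay of $\omega$ at $\partial I$ that is needed for the maximum to exist in the first place, this is incompatible with $c\neq 0$, yielding $P=Q$.

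The main obstacle is the degenerate direction $\operatorname{span}(\mathbf{1})$ in which the interaction Hessian is automatically flat; the bare hypothesis $(\log\omega)'' \le 0$ does not by itself upgrade concavity of $F$ to strict concavity in that direction, so the Hessian analysis alone is insufficient. Excluding a translational family of maximizers requires bringing in both the critical-point equations (to pin down that any second maximizer must be a pure translate of the first) and the boundary behaviour of $\omega$ on $\partial I$ (to rule such translates out), as sketched above.
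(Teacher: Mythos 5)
Your argument is essentially correct and reaches the same conclusion by a genuinely different implementation. The paper proves midpoint concavity of $\log T_\omega$ directly: it takes two ordered maximizers $\{a_i\}$, $\{b_i\}$, forms the midpoints $c_i=(a_i+b_i)/2$, rewrites $T_\omega$ as a product over pairs, and applies the arithmetic--geometric mean inequality to $|c_i-c_j|$ together with the concavity of $\log\omega$; you instead establish concavity differentially, via the Hessian identity $v^{\top}H^{\mathrm{int}}v=-2\sum_{i<j}(v_i-v_j)^2/(x_i-x_j)^2$ on the ordered chamber, and then analyze the flat directions. The two routes are equivalent in substance (your quadratic form is exactly the second derivative of the paper's pairwise logarithms along the segment from $P$ to $Q$), but yours is more transparent about the one genuine subtlety: equality in the pairwise estimates only forces $Q=P+c\mathbf{1}$ with $\log\omega$ affine on the intervening segments, not $P=Q$ outright. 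The paper's proof asserts ``equality holds if and only if $a_i=b_i$'' without addressing this translational degeneracy, so your explicit treatment of it via the first-order conditions (which, after cancelling the translation-invariant interaction sums, give $(\log\omega)'(x_i+c)=(\log\omega)'(x_i)$) is a real improvement. Be aware, though, that your final step of excluding $c\neq 0$ appeals to boundary decay of $\omega$, which is not among the lemma's stated hypotheses; it holds for the Hermite/Laguerre/Jacobi-type weights the paper actually uses, but under the bare assumptions ``non-negative, continuous, log-concave'' neither your argument nor the paper's rules out a translational family of maximizers (consider $\omega\equiv 1$ on $I=\mathbb{R}$), so this extra hypothesis should be stated rather than left implicit.
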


\begin{proof}
Assume that $\{a_i\}^{n}_{i=1}$ and $\{b_i\}^{n}_{i=1}$ are maximum points of $T_{\omega}$ enumerated in increasing order, let $c_i=(a_i+b_i)/2$. We consider the value of $T_{\omega}$ at the point $\{c_i\}^{n}_{i=1}$. Rewrite $T_{\omega}$ as
$$
T_{\omega}(x_1,\cdots,x_n)=\prod^{n}_{j=1}\omega(x_j)\prod_{1\leq i<j\leq n}|x_i-x_j|^2=\prod_{1\leq i<j\leq n}|x_i-x_j|^2[\omega(x_i)\omega(x_j)]^{\frac{4}{n(n-1)}},
$$
then because of the ordering of the points and the log-concavity of $\omega$, using the arithmetic-geometric mean inequality,
$$
\text{log}\big\{|c_i-c_j|^2[\omega(c_i)\omega(c_j)]^{\frac{4}{n(n-1)}}\big\} = \text{log}|c_i-c_j|^2+\frac{4}{n(n-1)}\big[\text{log}\omega(c_i)+\text{log}\omega(c_j)\big]
$$
$$
\hspace{-8.5mm}=\text{log}\big(|\frac{a_i-a_j}{2}|+|\frac{b_i-b_j}{2}|\big)^2+\frac{4}{n(n-1)}\big[\text{log}\omega(\frac{a_i+b_i}{2})+\text{log}\omega(\frac{a_j+b_j}{2})\big]
$$
$$
\geq \text{log}|a_i-a_j||b_i-b_j|+\frac{2}{n(n-1)}\big[\text{log}\omega(a_i)+\text{log}\omega(b_i)+\text{log}\omega(a_j)+\text{log}\omega(b_j)\big]
$$
$$
\hspace{-1cm} =\frac{1}{2}\text{log}\big\{|a_i-a_j|^2[\omega(a_i)\omega(a_j)]^{\frac{4}{n(n-1)}}\big\}+\frac{1}{2}\text{log}\big\{|b_i-b_j|^2[\omega(b_i)\omega(b_j)]^{\frac{4}{n(n-1)}}\big\}
$$
where the equality holds if and only if $a_i=b_i$, $i=1,\cdots,n$, which establishes the uniqueness.
\end{proof}

\par
As it is pointed out in \cite{key21} that Stieltjes shows (when $\omega$ is a classical weight) $-\text{log}T_{\omega}(x_1,\cdots,x_n)$ attains a minimum when $x_1$, $\cdots$, $x_n$ are the zeros of the corresponding classical orthogonal polynomial. However, according to the observation of an author of \cite{key21} Stieljes does not explicitly show that this position is a minimum (even though he explicitly mentions that it is a minimum). Here we reformulate these results as the following theorem and give an explicit proof.

\begin{thm}
Let $\omega(x)=\hat{\omega}(x)p(x)$, where $\hat{\omega}(x)$ takes a classical weight, namely $\omega(x)=e^{-x^2}$ for Hermite polynomials, or $\omega(x)=x\cdot x^{\alpha}e^{-x}$ for Laguerre polynomials, or $\omega(x)=(1-x^2)\cdot (1-x)^{\alpha}(1+x)^{\beta}$ for Jacobi polynomials. Then in the domain $I$ with respect to $\hat{\omega}(x)$, the energy function $T_{\omega}$ attains its maximum at the set of zeros of the corresponding orthogonal polynomials. 
\end{thm}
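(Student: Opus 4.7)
The plan is to follow Stieltjes's classical strategy by directly extremizing $\log T_\omega$. First I would verify that $T_\omega$ attains its maximum at an interior configuration with pairwise distinct coordinates: the function is non-negative and continuous on $I^n$, vanishes on the diagonal $\{x_i=x_j\}$, and vanishes at the topological boundary because $p(x)$ absorbed into $\omega$ forces the weight to vanish at the finite endpoints in the Laguerre and Jacobi cases, while in the Hermite case the Gaussian factor $\prod_j e^{-x_j^2}$ dominates the polynomial Vandermonde $\prod_{i<j}|x_i-x_j|^2$ as any coordinate escapes to infinity. Hence a maximizer exists in the interior and the first-order conditions $\partial\log T_\omega/\partial x_k=0$ must hold there.

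These conditions read
\[
\sum_{j\neq k}\frac{1}{x_k-x_j}=-\frac{\omega'(x_k)}{2\omega(x_k)},\qquad k=1,\dots,n,
\]
and I would check by direct substitution that the right-hand side coincides in each of the three cases with $S_{1,k}$ computed from formula (2.9) for the corresponding classical orthogonal polynomial, i.e., from the classical second-order ODE obtained by setting $\eta$ to a constant in (2.3)--(2.5). So the zeros of the relevant classical polynomial form a critical configuration. To see that no other configuration can, I would run the standard ODE argument: writing $p(x)=\prod_k(x-x_k)$ for any interior critical configuration and using $S_{1,k}=p''(x_k)/(2p'(x_k))$ together with the above first-order condition, one finds that the expression $p''(x)-(\text{classical }q)p'(x)+(\text{classical }r)p(x)$ vanishes at each $x_k$ and, by a degree count, vanishes identically; hence $p$ is proportional to the classical orthogonal polynomial of degree $n$.

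Finally I would invoke Lemma 4.2. Log-concavity of each weight is immediate: $(\log e^{-x^2})''=-2$; on $(0,\infty)$, $(\log x^{\alpha+1}e^{-x})''=-(\alpha+1)/x^2$, which is non-positive for $\alpha>-1$; and on $(-1,1)$, $(\log((1-x)^{\alpha+1}(1+x)^{\beta+1}))''=-(\alpha+1)/(1-x)^2-(\beta+1)/(1+x)^2$, which is non-positive for $\alpha,\beta>-1$. Lemma 4.2 then delivers uniqueness of the maximizer, and since the zero set of the classical orthogonal polynomial is the unique interior critical configuration, it must realize the maximum. The main obstacle, modest as it is, is the Hermite existence step: the domain is unbounded, so a quantitative estimate is needed to rule out a maximizing sequence escaping to infinity; the super-polynomial decay of $\prod_j e^{-x_j^2}$ against the polynomial growth of the Vandermonde factor handles it, but this should be stated explicitly. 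A secondary point worth noting is that the ``classical Laguerre / Jacobi polynomial'' intended here has parameters $\alpha$ and $(\alpha,\beta)$ rather than $\alpha+1$ or $\beta+1$, since the factor $x$ (respectively $1-x^2$) absorbed into $\omega$ shifts the Pearson parameter by one, in agreement with the convention stated in the introduction.
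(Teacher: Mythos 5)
Your argument is correct, but it reaches the conclusion by a genuinely different route than the paper at the decisive step. The paper verifies the first-order conditions at the zeros exactly as you do (Pearson equation plus $S_{1,i}=-q(x_i)/(2p(x_i))$), but then proceeds by a second-order analysis: it computes the Hessian of $-\log T_{\omega}$, observes that the off-diagonal entries are $-2/(x_i-x_j)^2$ while the diagonal entries exceed the corresponding row sums because $q(x)p'(x)-p(x)q'(x)>0$ in each classical case, invokes Lemma 4.1 (Gershgorin) to conclude positive definiteness, and then uses log-concavity (Lemma 4.2) together with the vanishing of $T_{\omega}$ at the boundary to upgrade the local maximum to a unique global one. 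You instead establish existence of an interior maximizer with distinct coordinates and show that the first-order conditions admit only one solution, via the classical Stieltjes device: $p(x)P''(x)+q(x)P'(x)$ vanishes at the $n$ critical coordinates and has degree at most $n$, hence equals $\lambda P(x)$, forcing $P$ to be the classical orthogonal polynomial. Your route is shorter and bypasses the Hessian entirely; note that your appeal to Lemma 4.2 is then redundant, since a unique interior critical configuration together with an interior maximizer already identifies the maximizer. What the paper's longer route buys is reusability: the diagonal-dominance computation is exactly the machinery carried over to Theorem 4.4 for the exceptional case, where your degree-count argument breaks down (rational coefficients in the ODE, complex zeros) and diagonal dominance becomes the nontrivial hypothesis imposed on $\eta$. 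Your closing remarks on the Hermite existence estimate and on the parameter shift $\alpha\mapsto\alpha+1$ induced by absorbing $p(x)$ into the weight are both correct and consistent with the paper's conventions.
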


\begin{proof}
Let $x_1$, $\cdots$, $x_n$ denote the zeros of 
classical orthogonal polynomial of degree $n$ with respect to $\hat{\omega}(x)$ (with $\eta(x)=1$),
\begin{eqnarray*}
\frac{\partial\text{log}T_{\omega}(y_1,\cdots,y_n)}{\partial y_i} &=& \big(\frac{\omega'}{\omega}\big)(y_i)+\sum^{n}_{k=1,k\neq i}\frac{2}{y_i-y_k}, \\
&=& \frac{\hat{\omega}'(y_i)}{\hat{\omega}(y_i)}+\frac{p'(y_i)}{p(y_i)}+\sum^{n}_{k=1,k\neq i}\frac{2}{y_i-y_k}, 
\end{eqnarray*}
it follows from Pearson equation $(2.1)$ and the Stieltjes-Calogero type relation $(2.9)$
$$
\frac{\hat{\omega}'(x)}{\hat{\omega}(x)}+\frac{p'(x)}{p(x)}=\frac{q(x)}{p(x)}, \hspace{4mm} S_{1,i}=\sum^{n}_{k=1,k\neq i}\frac{1}{x_i-x_k}=-\frac{q(x_i)}{2p(x_i)}.
$$
that 
$$
\frac{\partial\text{log}T_{\omega}(x_1,\cdots,x_n)}{\partial x_i}=0.
$$
Thus, $X=$\{$x_1$, $\cdots$, $x_n$\} is a critical point of the energy function $T_{\omega}$, which means $T_{\omega}$ has a local extremum at $X$. Next we consider the Hessian matrix $H$ of $(-\text{log}T_{\omega})$, 
if $H$ is positive definite at $X$, then $T_{\omega}$ has a local maximum at $X$. The off-diagonal and diagonal elements of $H$ are given by

\begin{eqnarray*}
H_{ij}=\frac{\partial^{2}(-\text{log}(T_{\omega}(y_1,\cdots,y_n)))}{\partial y_{i}\partial y_{j}} &=& -\frac{2}{(y_i-y_j)^2}, \quad i\neq j \\
H_{ii}=\frac{\partial^{2}(-\text{log}(T_{\omega}(y_1,\cdots,y_n)))}{\partial y_{i}^{2}} &=& \big(-\frac{\omega'}{\omega}\big)'(y_i)+\sum^{n}_{j=1,j\neq i}\frac{2}{(y_i-y_j)^2} \\
&=& \frac{q(y_i)p'(y_i)-p(y_i)q'(y_i)}{p(y_i)^2}+\sum^{n}_{j=1,j\neq i}\frac{2}{(y_i-y_j)^2}.
\end{eqnarray*}
Since $q(x)p'(x)-p(x)q'(x)>0$ for any $x\in\mathbb{R}$, $H$ is Hermitian, strictly diagonally dominant, and has real positive diagonal entries, thus it immediately follows that $H$ is positive definite, which means $T_{\omega}$ has a maximum value at $\{x_1, \cdots, x_n\}$. In fact, $q(x)p'(x)-p(x)q'(x)>0$ is always true in the case of classical orthogonal polynomials, denote the left hand side of the inequality as $F(x)$. For Hermite polynomials, $p(x)=1,q(x)=-2x$, $F(x)=2>0$; for Laguerre polynomials, $p(x)=x,q(x)=\alpha+1-x$, $F(x)=\alpha+1>0$ (since $\alpha>-1$); for Jacobi polynomials, $p(x)=1-x^2,q(x)=\beta-\alpha-(\alpha+\beta+2)x$, $F(x)=(\alpha+\beta+2)(1+x^2)$ $-(\beta-\alpha)2x>0$ (since $\alpha,\beta>-1$).

The uniqueness follows from the fact that each weight $\omega(x)$ is log-concave in the related domain $I$. Moreover, $T_{\omega}$ tends to zero at the boundary of the domain related to $\hat{\omega}$, thus it attains a unique maximum at $\{x_1, \cdots, x_n\}$.
\end{proof}

\par
Note that the zeros of classical orthogonal polynomials are all real, simple and distinct \cite[chapter 6.2]{key1}, which guarantees that $T_{\omega}$ dose not vanish at the set of these zeros. However, the exceptional orthogonal polynomials have complex zeros and were conjectured to have simple zeros except possibly for the zeros at $z=0$ \cite{key8}. 
Here we assume an exceptional orthogonal polynomial $P_{n+m}(z)$ of degree $n+m$ has simple zeros, 
denote the set of zeros of $P_{n+m}(z)$ as
$$
Z=\{z_1,\cdots,z_n,z_{n+1},\cdots,z_{n+m}\}
$$
where $z_1=x_1$, $\cdots$, $z_n=x_n$ are the $n$ real zeros and $z_{n+1}=x_{n+1}+i\mu_1$, $\cdots$, $z_{n+m}=x_{n+m}+i\mu_m$ are the $m$ complex zeros. Fix $\mu_1$, $\cdots$, $\mu_m$, we consider the function $T_{\omega}(Y)=T_{\omega}(y_1,\cdots,y_n,y_{n+1}+i\mu_{1},\cdots,y_{n+m}+i\mu_{m})$ with $n+m$ real variables. $T_{\omega}(Y)$ is a complex-valued function as long as $m\geq 1$, so we check the maximum value of $|T_{\omega}(Y)|^2=T_{\omega}(Y)\overline{T_{\omega}(Y)}$ instead. First, rewrite $T_{\omega}$ as
$$
T_{\omega}(Y)=\prod^{n}_{i=1}\omega(y_i)\cdot\prod^{m}_{j=1}\omega(y_{n+j}+i\mu_{j})\cdot\prod_{1\leq i<j\leq n}|y_i-y_j|^2\cdot\prod_{1\leq k<l\leq m}|y_{n+k}+i\mu_{k}-(y_{n+l}+i\mu_{l})|^2
$$
$$
\hspace{-6.8cm}\cdot\prod_{\substack{1\leq s\leq n \\ 1\leq t\leq m}}|y_{s}-(y_{n+t}+i\mu_{t})|^2,
$$
then we have
$$
|T_{\omega}(Y)|^2=\prod^{n}_{i=1}\omega^{2}(y_i)\cdot\prod^{m}_{j=1}\omega(y_{n+j}+i\mu_{j})\omega(y_{n+j}-i\mu_{j})\cdot\prod_{1\leq i<j\leq n}|y_i-y_j|^4\cdot\prod_{1\leq k<l\leq m}|(y_{n+k}-y_{n+l})^2+(\mu_{k}-\mu_{l})^2|^2
$$
$$
\hspace{-7.2cm} \cdot\prod_{\substack{1\leq s\leq n \\ 1\leq t\leq m}}|(y_{s}-y_{n+t})^2+\mu^2_{t})|^2.
$$
For $1\leq i\leq n$, 
$$
\hspace{-3.8cm}\frac{\partial\text{log}|T_{\omega}(Y)|^2}{\partial y_i}=
2\frac{\omega'(y_i)}{\omega(y_i)}+\sum^{n}_{j=1,j\neq i}\frac{4}{y_i-y_j}+\sum^{m}_{t=1}\frac{4(y_{i}-y_{n+t})}{(y_i-y_{n+t})^2+\mu^2_{t}},
$$
$$
\hspace{1.3cm} =2\frac{\omega'(y_i)}{\omega(y_i)}+\sum^{n}_{j=1,j\neq i}\frac{4}{y_i-y_j}+\sum^{m}_{t=1}\frac{4}{y_i-(y_{n+t}+i\mu_{t})}-\sum^{m}_{t=1}\frac{4i\mu_{t}}{(y_i-y_{n+t})^2+\mu^2_{t}},
$$
notice that the sum of the first three terms on the right hand side equals $0$ at $\{x_1,\cdots,x_{n+m}\}$ due to $(2.1)$ and $(2.9)$, which implies 
\begin{equation}
\sum^{m}_{t=1}\frac{\mu_{t}}{(x_i-x_{n+t})^2+\mu^2_{t}}=0,
\end{equation}
since the left side is real.\\
For $n+1\leq i\leq n+m$,
$$
\frac{\partial\text{log}|T_{\omega}(Y)|^2}{\partial y_i}=
\frac{\omega'(y_{i}+i\mu_{i-n})}{\omega(y_{i}+i\mu_{i-n})}+\frac{\omega'(y_{i}-i\mu_{i-n})}{\omega(y_{i}-i\mu_{i-n})}+\sum^{m}_{l=1,l\neq i-n}\frac{4(y_{i}-y_{n+l})}{(y_{i}-y_{n+l})^2+(\mu_{i-n}-\mu_{l})^2}+\sum^{n}_{s=1}\frac{4(y_i-y_s)}{(y_{i}-y_{s})^2+\mu^2_{i-n}}
$$
$$
\hspace{0.9cm} =2\frac{\omega'(y_{i}+i\mu_{i-n})}{\omega(y_{i}+i\mu_{i-n})}+\sum^{m}_{l=1,l\neq i-n}\frac{4}{(y_{i}+i\mu_{i-n})-(y_{n+l}+i\mu_{l})}+\sum^{n}_{s=1}\frac{4}{(y_{i}+i\mu_{i-n})-y_s}
$$
$$
\hspace{2.4cm} +\frac{\omega'(y_{i}-i\mu_{i-n})}{\omega(y_{i}-i\mu_{i-n})}-\frac{\omega'(y_{i}+i\mu_{i-n})}{\omega(y_{i}+i\mu_{i-n})}+\sum^{m}_{l=1,l\neq i-n}\frac{4i(\mu_{i-n}-\mu_{l})}{(y_{i}-y_{n+l})^2+(\mu_{i-n}-\mu_{l})^2}+\sum^{n}_{s=1}\frac{4i\mu_{i-n}}{(y_{i}-y_{s})^2+\mu^2_{i-n}}
,$$
again we find that the sum of the first three terms on the right hand side equals $0$ at $\{x_1,\cdots,x_{n+m}\}$, thus implies
\begin{equation}
\frac{\omega'(x_{i}-i\mu_{i-n})}{\omega(x_{i}-i\mu_{i-n})}-\frac{\omega'(x_{i}+i\mu_{i-n})}{\omega(x_{i}+i\mu_{i-n})}+\sum^{m}_{l=1,l\neq i-n}\frac{4i(\mu_{i-n}-\mu_{l})}{(x_{i}-x_{n+l})^2+(\mu_{i-n}-\mu_{l})^2}+\sum^{n}_{s=1}\frac{4i\mu_{i-n}}{(x_{i}-x_{s})^2+\mu^2_{i-n}}=0.
\end{equation}
Therefore we have shown that $\{x_1,\cdots,x_{n+m}\}$ is a critical point of $|T_{\omega}(Y)|^2$. 

The Hessian matrix $H$ of $(-\text{log}|T_{\omega}(Y)|^2)$ has four types off-diagonal elements and two types diagonal elements:
\begin{eqnarray*}
H_{ij} &=& -\frac{4}{(y_i-y_j)^2}, \quad 1\leq i\leq n, 1\leq j\leq n, i\neq j, \\
H_{ij} &=& -\frac{4[(y_i-y_j)^2-\mu^2_{j-n}]}{[(y_i-y_j)^2+\mu^2_{j-n}]^2}, \quad 1\leq i\leq n, n+1\leq j\leq n+m, \\
H_{ij} &=& -\frac{4[(y_i-y_j)^2-\mu^2_{i-n}]}{[(y_i-y_j)^2+\mu^2_{i-n}]^2}, \quad n+1\leq i\leq n+m, 1\leq j\leq n, \\
H_{ij} &=& -\frac{4[(y_i-y_j)^2-(\mu_{i-n}-\mu_{j-n})^2]}{[(y_i-y_j)^2+(\mu_{i-n}-\mu_{j-n})^2]^2}, \quad n+1\leq i\leq n+m, n+1\leq j\leq n+m, i\neq j,
\end{eqnarray*}
and
\begin{eqnarray*}
H_{ii} &=& \bigg(-\frac{2\omega'(y_i)}{\omega(y_i)}\bigg)'+\sum^{n}_{j=1,j\neq i}\frac{4}{(y_i-y_j)^2}+\sum^{m}_{t=1}\frac{4[(y_i-y_{n+t})^2-\mu^2_{t}]}{[(y_i-y_{n+t})^2+\mu^2_{t}]^2}, \quad 1\leq i\leq n, \\
H_{ii} &=& \bigg(-\frac{\omega'(y_i+i\mu_{i-n})}{\omega(y_i+i\mu_{i-n})}\bigg)'+\bigg(-\frac{\omega'(y_i-i\mu_{i-n})}{\omega(y_i-i\mu_{i-n})}\bigg)'+\sum^{m}_{l=1,l\neq i}\frac{4[(y_i-y_{n+l})^2-(\mu_{i-n}-\mu_l)^2]}{[(y_i-y_{n+l})^2+(\mu_{i-n}-\mu_l)^2]^2} \\
& & +\sum^{n}_{s=1}\frac{4[(y_i-y_{s})^2-\mu^2_{i-n}]}{[(y_i-y_{s})^2+\mu^2_{i-n}]^2}, \quad n+1\leq i\leq n+m.
\end{eqnarray*}
In order to find the condition for $H$ to be positive definite, it should be satisfied that
$$
H_{ii}>0 \quad\text{and}\quad H_{ii}>\sum^{n+m}_{j=1,j\neq i}|H_{ij}|, \quad 1\leq i\leq n+m,
$$
which is equivalent as
$$
H_{ii}>\sum^{n+m}_{j=1,j\neq i}|H_{ij}|, \quad 1\leq i\leq n+m.
$$
When $1\leq i\leq n$, we have
$$
\hspace{-0.8cm}H_{ii}-\sum^{n+m}_{j=1,j\neq i}|H_{ij}| = \bigg(-\frac{2\omega'(y_i)}{\omega(y_i)}\bigg)'+\sum^{m}_{t=1}\frac{4[(y_i-y_{n+t})^2-\mu^2_{t}]}{[(y_i-y_{n+t})^2+\mu^2_{t}]^2}-\sum^{m}_{t=1}\bigg|\frac{4[(y_i-y_{n+t})^2-\mu^2_{t}]}{[(y_i-y_{n+t})^2+\mu^2_{t}]^2}\bigg|
$$
$$
\hspace{1.3cm} =\bigg(-\frac{2\omega'(y_i)}{\omega(y_i)}\bigg)'+\sum^{m}_{t=1}\bigg[\frac{4}{(y_i-y_{n+t})^2+\mu^2_{t}}-\frac{8\mu^2_{t}}{[(y_i-y_{n+t})^2+\mu^2_{t}]^2}\bigg]
$$
$$
\hspace{3.5cm} -\sum^{m}_{t=1}\bigg|\frac{4}{(y_i-y_{n+t})^2+\mu^2_{t}}-\frac{8\mu^2_{t}}{[(y_i-y_{n+t})^2+\mu^2_{t}]^2}\bigg|
$$
$$
\hspace{-1.6cm} \geq\bigg(-\frac{2\omega'(y_i)}{\omega(y_i)}\bigg)'-\sum^{m}_{t=1}\frac{16\mu^2_{t}}{[(y_i-y_{n+t})^2+\mu^2_{t}]^2}
$$
$$
\hspace{-2.6cm} \geq\bigg(-\frac{2\omega'(y_i)}{\omega(y_i)}\bigg)'-\sum^{m}_{t=1}\frac{4}{(y_i-y_{n+t})^2},
$$
when $n+1\leq i\leq n+m$,
$$
\hspace{2.2mm} H_{ii}-\sum^{n+m}_{j=1,j\neq i}|H_{ij}| = \bigg(-\frac{\omega'(y_i+i\mu_{i-n})}{\omega(y_i+i\mu_{i-n})}\bigg)'+\bigg(-\frac{\omega'(y_i-i\mu_{i-n})}{\omega(y_i-i\mu_{i-n})}\bigg)'+\sum^{m}_{l=1,l\neq i-n}\frac{4[(y_i-y_{n+l})^2-(\mu_{i-n}-\mu_l)^2]}{[(y_i-y_{n+l})^2+(\mu_{i-n}-\mu_l)^2]^2}
$$
$$
\hspace{0.5cm} +\sum^{n}_{s=1}\frac{4[(y_i-y_{s})^2-\mu^2_{i-n}]}{[(y_i-y_{s})^2+\mu^2_{i-n}]^2}-\sum^{m}_{l=1,l\neq i-n}\bigg|\frac{4[(y_i-y_{n+l})^2-(\mu_{i-n}-\mu_l)^2]}{[(y_i-y_{n+l})^2+(\mu_{i-n}-\mu_l)^2]^2}\bigg|-\sum^{n}_{s=1}\bigg|\frac{4[(y_i-y_{s})^2-\mu^2_{i-n}]}{[(y_i-y_{s})^2+\mu^2_{i-n}]^2}\bigg|
$$
$$
\hspace{2.5cm} \geq\bigg(-\frac{\omega'(y_i+i\mu_{i-n})}{\omega(y_i+i\mu_{i-n})}\bigg)'+\bigg(-\frac{\omega'(y_i-i\mu_{i-n})}{\omega(y_i-i\mu_{i-n})}\bigg)'-\sum^{m}_{l=1,l\neq i-n}\frac{4}{(y_i-y_{n+l})^2}-\sum^{n}_{s=1}\frac{4}{(y_i-y_s)^2}
$$
$$
\hspace{0.2cm} =\bigg(-\frac{\omega'(y_i+i\mu_{i-n})}{\omega(y_i+i\mu_{i-n})}\bigg)'+\bigg(-\frac{\omega'(y_i-i\mu_{i-n})}{\omega(y_i-i\mu_{i-n})}\bigg)'-\sum^{n+m}_{j=1,j\neq i}\frac{4}{(y_i-y_j)^2}.
$$
The above computations show that if it holds that
\begin{equation}
\bigg(-\frac{2\omega'(x_i)}{\omega(x_i)}\bigg)'>\sum^{n+m}_{j=n+1}\frac{4}{(x_i-x_j)^2}
\end{equation}
and
\begin{equation}
\bigg(-\frac{\omega'(x_i+i\mu_{i-n})}{\omega(x_i+i\mu_{i-n})}\bigg)'+\bigg(-\frac{\omega'(x_i-i\mu_{i-n})}{\omega(x_i-i\mu_{i-n})}\bigg)'>\sum^{n+m}_{j=1,j\neq i}\frac{4}{(x_i-x_j)^2}
\end{equation}
thus $|T_{\omega}(Y)|^2$ has a (local) maximum value at $X=\{x_1,\cdots,x_n,x_{n+1},\cdots,x_{n+m}\}$. According to these computations we give the following theorem which provide a sufficient condition for $|T_{\omega}(Y)|^2$ to obtain its maximum value at $X$.

\par
\begin{thm}
Let $\omega(x)=\hat{\omega}(x)p(x)$, where $\hat{\omega}(x)$ takes an exceptional weight, namely $\omega(x)=e^{-x^2}/\eta^{2}_{H}(x)$ for exceptional Hermite polynomials, or $\omega(x)=x\cdot x^{\alpha}e^{-x}/\eta^{2}_{L}(x)$ for exceptional Laguerre polynomials, or $\omega(x)=(1-x^2)\cdot (1-x)^{\alpha}(1+x)^{\beta}/\eta^{2}_{J}(x)$ for exceptional Jacobi polynomials. If the denominators $\eta_{H}(x)$, $\eta_{L}(x)$, $\eta_{J}(x)$ satisfy the following conditions
\begin{eqnarray}
\big(\text{log}\eta_{\alpha}(x)\big)''+k_{\alpha} &\geq& 0, \hspace{2mm} x\in I \\
\big(\text{log}\eta_{\alpha}(x)\big)''|_{x=z_i}+k_{\alpha} &>& \sum^{n+m}_{j=n+1}\frac{1}{(x_i-x_j)^2},
\hspace{2mm} 1\leq i\leq n+m.
\end{eqnarray}
then in the domain $I$ with respect to $\hat{\omega}(x)$, the real-valued function $|T_{\omega}(Y)|^2$ attains its maximum value at $X$, where $\alpha$ can be replaced by $H$, $L$, $J$, respectively, $k_H=1$, $k_L=k_J=0$.
\end{thm}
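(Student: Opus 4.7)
The plan is to leverage everything derived in the paragraphs preceding the theorem: the first-order critical point condition for $|T_{\omega}(Y)|^{2}$ at $X=\{x_{1},\dots,x_{n+m}\}$ has already been verified via the Pearson equation $(2.1)$ and the Stieltjes--Calogero relation $(2.9)$, and the positive definiteness of the Hessian of $-\log|T_{\omega}(Y)|^{2}$ has been reduced, through Lemma~4.1 (Hermitian strict diagonal dominance with positive real diagonal), to the two inequalities $(4.3)$ and $(4.4)$. My proof will consist of three steps: (i) invoke the critical point derivation verbatim, (ii) translate the abstract conditions $(4.3)$--$(4.4)$ into the polynomial conditions $(4.5)$--$(4.6)$ by a direct computation in each of the three families, and (iii) upgrade the resulting local maximum to a global maximum via Lemma~4.2 and the decay of $\omega$ on $\partial I$.

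For Step (ii), the key observation is that in all three cases one can write $\omega(x)=\hat\omega(x)\,p(x)=W_{\mathrm{cl}}(x)/\eta_{\alpha}(x)^{2}$, where $W_{\mathrm{cl}}$ is a \emph{shifted} classical weight, namely $e^{-x^{2}}$, $x^{\alpha+1}e^{-x}$, or $(1-x)^{\alpha+1}(1+x)^{\beta+1}$. This yields
\[
-(\log\omega)''(x)\;=\;-(\log W_{\mathrm{cl}})''(x)+2(\log\eta_{\alpha})''(x),
\]
with $-(\log W_{\mathrm{cl}})''=2$ in the Hermite case, $(\alpha+1)/x^{2}$ in the Laguerre case, and $(\alpha+1)/(1-x)^{2}+(\beta+1)/(1+x)^{2}$ in the Jacobi case; the latter two are non-negative on the associated interval $I$ because $\alpha,\beta>-1$. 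Substituting into $(4.3)$ and dividing by $4$ turns it into exactly $(4.6)$ with $k_{H}=1$, $k_{L}=k_{J}=0$, and any extra positive contribution from $-(\log W_{\mathrm{cl}})''/4$ in the Laguerre/Jacobi cases only strengthens the strict inequality. For the complex-zero inequality $(4.4)$, the pair of values $-(\log\omega)''(x_{i}\pm i\mu_{i-n})$ are complex conjugates because $\eta_{\alpha}$ has real coefficients, so their sum equals $2\,\mathrm{Re}\bigl[-(\log\omega)''(z_{i})\bigr]$, and the same computation reduces $(4.4)$ to the real-part form of $(4.6)$. Finally, $(4.5)$ is equivalent to log-concavity of $\omega$ on $I$, so Lemma~4.2 gives uniqueness of the interior maximum, and the vanishing of $\omega$ at $\partial I$ excludes boundary maxima.

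The main obstacle will be the complex-zero part of Step (ii): one must verify that the off-diagonal Hessian entries of the form $|H_{ij}|=4\,\bigl|(y_{i}-y_{j})^{2}-\mu^{2}\bigr|/\bigl[(y_{i}-y_{j})^{2}+\mu^{2}\bigr]^{2}$ are uniformly dominated by $4/(x_{i}-x_{j})^{2}$, and that after subtracting them from the diagonal the conjugate-pair combination of the weight derivatives yields exactly the real form of $(4.6)$. Both facts are routine but require the identity $(a^{2}+\mu^{2})^{2}-4a^{2}\mu^{2}=(a^{2}-\mu^{2})^{2}\ge 0$, which gives $4\mu^{2}/(a^{2}+\mu^{2})^{2}\le 1/a^{2}$, together with the reality of $(\log\eta_{\alpha})''(z)+(\log\eta_{\alpha})''(\bar z)$. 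A subsidiary but easier point is that in the Laguerre and Jacobi cases the constant $k_{\alpha}$ can be taken to be $0$ even though the corresponding $-(\log W_{\mathrm{cl}})''$ is not identically zero; this is because that extra term is strictly positive on $I$ and can only help the diagonal dominance inequality.
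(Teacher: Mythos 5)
Your proposal is correct and follows essentially the same route as the paper: reuse the critical-point computation and the diagonal-dominance reduction to $(4.3)$--$(4.4)$, convert these to conditions on $(\log\eta_{\alpha})''$ by splitting $-(\log\omega)''$ into a classical part plus $2(\log\eta_{\alpha})''$ (using $4a^{2}\mu^{2}\le(a^{2}+\mu^{2})^{2}$ and conjugate symmetry for the complex zeros), and conclude uniqueness via log-concavity and boundary decay. The only discrepancies are cosmetic: the paper's classical parts carry the shifts $k'$, $k_1'$, $k_2'$ coming from $(2.4)$--$(2.5)$ rather than your $\alpha+1$, $\beta+1$, and condition $(4.5)$ is sufficient for (not equivalent to) log-concavity of $\omega$ in the Laguerre and Jacobi cases; neither affects the argument.
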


\begin{proof}
Till now it has been known that if $(4.3)$ and $(4.4)$ are satisfied then $|T_{\omega}(Y)|^2$ has a (local) maximum value at $X$. The uniqueness requires that $(\text{log}\omega(x))''=(q(x)/p(x))'\leq0$, $\forall x\in I$, provided the information of $p(x)$, $q(x)$ are given in $(2.3)$, $(2.4)$, $(2.5)$, respectively, we check case by case using the same notations as we did in the proof of Theorem 4.3. 
Assuming it satisfies that
\begin{eqnarray}
\big(\text{log}\eta_{H}(x)\big)''+1 &\geq& 0, \hspace{2mm} x\in(-\infty,\infty),\\
\big(\text{log}\eta_{L}(x)\big)'' &\geq& 0, \hspace{2mm} x\in(0,\infty),\\
\big(\text{log}\eta_{J}(x)\big)'' &\geq& 0, \hspace{2mm} x\in(-1,1),
\end{eqnarray}
then for exceptional Hermite polynomials, it holds that
$$
F(x)=2+2\big(\text{log}\eta_{H}(x)\big)''\geq 0 ,
$$
as well as for exceptional Laguerre polynomials $(\alpha>-1, k'>0)$ we have
$$
F(x)=\alpha+k'+1+2x^2\big(\text{log}\eta_{L}(x)\big)''\geq 0 ,
$$
and for exceptional Jacobi polynomials $(\alpha,\beta>-1, k'_1+k'_2>0)$ we have
$$
F(x)=(\alpha+\beta+2k'_1+2)(1+x^2)-(\beta-\alpha-2k'_2)2x+2(1-x^2)^2\big(\text{log}\eta_{J}(x)\big)''\geq 0,
$$
Moreover, since it holds respectively for the weight functions of exceptional Hermite, Laguerre, Jacobi polynomial that
$$
\bigg(-\frac{\omega'(x)}{\omega(x)}\bigg)'=
\begin{cases}
2+2\big(\text{log}\eta_{H}(x)\big)'' \\
\frac{\alpha+k'+1}{x^2}+2\big(\text{log}\eta_{L}(x)\big)'' >2\big(\text{log}\eta_{L}(x)\big)'' \\
\frac{(\alpha+\beta+2k'_1+2)(1+x^2)-(\beta-\alpha-2k'_2)2x}{(1-x^2)^2}+2\big(\text{log}\eta_{J}(x)\big)'' >2\big(\text{log}\eta_{J}(x)\big)''
\end{cases}
$$
we can rewrite $(4.3)$ and $(4.4)$ as stronger conditions which are implied by $(4.6)$:
\begin{equation}
\begin{cases}
1+\big(\text{log}\eta_{H}(x_i)\big)'' \\
\big(\text{log}\eta_{L}(x_i)\big)'' \\
\big(\text{log}\eta_{J}(x_i)\big)''
\end{cases}
\hspace{-4mm}>\sum^{n+m}_{j=n+1}\frac{1}{(x_i-x_j)^2}, \hspace{2mm} 1\leq i\leq n
\end{equation}
and
\begin{equation}
\begin{cases}
2+\big(\text{log}\eta_{H}(x_i+i\mu_{i-n})\big)''+\big(\text{log}\eta_{H}(x_i-i\mu_{i-n})\big)''\\
\big(\text{log}\eta_{L}(x_i+i\mu_{i-n})\big)''+\big(\text{log}\eta_{L}(x_i-i\mu_{i-n})\big)''\\
\big(\text{log}\eta_{J}(x_i+i\mu_{i-n})\big)''+\big(\text{log}\eta_{J}(x_i-i\mu_{i-n})\big)''
\end{cases}
\hspace{-4mm}>\sum^{n+m}_{j=1,j\neq i}\frac{2}{(x_i-x_j)^2}, \hspace{2mm} n+1\leq i\leq n+m.
\end{equation}

\par
Recall that $\omega(x)$ decays quickly at the boundary, thus $|T_{\omega}(Y)|^2$ tends to zero at the boundary. Concludingly, $|T_{\omega}(Y)|^2$ has a unique maximum at $X$ if $(4.5)$, $(4.6)$ are satisfied.
\end{proof}

\par
Notice that in $(4.1)$ and $(4.2)$ we have
$$
\frac{\mu_{t}}{(x_i-x_{n+t})^2+\mu^2_{t}}=\frac{1}{2i}\bigg[\frac{1}{(x_i-x_{n+t})-i\mu_{t}}-\frac{1}{(x_i-x_{n+t})+i\mu_{t}}\bigg],
$$
and
\begin{eqnarray*}
\frac{4i(\mu_{i-n}-\mu_{l})}{(x_{i}-x_{n+l})^2+(\mu_{i-n}-\mu_{l})^2} &=& 2\bigg[\frac{1}{(x_{i}-x_{n+l})-i(\mu_{i-n}-\mu_{l})}-\frac{1}{(x_{i}-x_{n+l})+i(\mu_{i-n}-\mu_{l})}\bigg], \\
\frac{4i\mu_{i-n}}{(x_{i}-x_{s})^2+\mu^2_{i-n}} &=& 2\bigg[\frac{1}{(x_{i}-x_{s})-i\mu_{i-n}}-\frac{1}{(x_{i}-x_{s})+i\mu_{i-n}}\bigg],
\end{eqnarray*}
thus obtain the following result.
\begin{cor}
If an exceptional orthogonal polynomial $P_{n+m}(z)$ has $n+m$ simple zeros consisting of $n$ real zeros and $m$ complex zeros:
$$
z_1=x_1,\ldots,z_n=x_n,z_{n+1}=x_{n+1}+i\mu_{1},\ldots,z_{n+m}=x_{n+m}+i\mu_{m}, 
$$
where $x_{i}\in\mathbb{R},i=1,\ldots,n+m$, $\mu_{j}\in\mathbb{R}, j=1,\ldots,m$,
then it satisfies
\begin{equation}
\sum^{m}_{t=1}\frac{1}{(x_i-x_{n+t})+i\mu_{t}}=\sum^{m}_{t=1}\frac{1}{(x_i-x_{n+t})-i\mu_{t}}, \quad 1\leq i\leq n,
\end{equation}
and 
\begin{eqnarray}
&&\frac{\omega'(x_{i}+i\mu_{i-n})}{\omega(x_{i}+i\mu_{i-n})}
+\sum^{m}_{l=1,l\neq i-n}\frac{2}{(x_{i}-x_{n+l})+i(\mu_{i-n}-\mu_{l})}
+\sum^{n}_{s=1}\frac{2}{(x_{i}-x_{s})+i\mu_{i-n}}\\
&=&\frac{\omega'(x_{i}-i\mu_{i-n})}{\omega(x_{i}-i\mu_{i-n})}
+\sum^{m}_{l=1,l\neq i-n}\frac{2}{(x_{i}-x_{n+l})-i(\mu_{i-n}-\mu_{l})}
+\sum^{n}_{s=1}\frac{2}{(x_{i}-x_{s})-i\mu_{i-n}}, \quad n+1\leq i\leq n+m. \nonumber
\end{eqnarray}
\end{cor}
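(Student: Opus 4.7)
The plan is simply to recognize Corollary 4.5 as the direct algebraic reformulation of equations $(4.1)$ and $(4.2)$ via the partial-fraction identities displayed immediately before the corollary. In other words, no new analytic content is needed beyond the critical-point calculation that has already produced $(4.1)$ and $(4.2)$; the corollary is a bookkeeping statement, and the proof amounts to substituting one set of expressions into the other.

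First I would handle $(4.11)$. Starting from $(4.1)$, which asserts
$$\sum^{m}_{t=1}\frac{\mu_{t}}{(x_i-x_{n+t})^2+\mu^2_{t}}=0,\qquad 1\leq i\leq n,$$
I apply the partial fraction identity
$$\frac{\mu_{t}}{(x_i-x_{n+t})^2+\mu^2_{t}}=\frac{1}{2i}\left[\frac{1}{(x_i-x_{n+t})-i\mu_{t}}-\frac{1}{(x_i-x_{n+t})+i\mu_{t}}\right]$$
termwise. The factor $1/(2i)$ can be pulled out, and vanishing of the resulting sum is equivalent to equality of the two sums displayed in $(4.11)$. This step is entirely formal.

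Next I would derive $(4.12)$ in the same way. Starting from $(4.2)$, I apply the two analogous partial fraction identities
$$\frac{4i(\mu_{i-n}-\mu_{l})}{(x_{i}-x_{n+l})^2+(\mu_{i-n}-\mu_{l})^2}=2\!\left[\frac{1}{(x_{i}-x_{n+l})-i(\mu_{i-n}-\mu_{l})}-\frac{1}{(x_{i}-x_{n+l})+i(\mu_{i-n}-\mu_{l})}\right]$$
and
$$\frac{4i\mu_{i-n}}{(x_{i}-x_{s})^2+\mu^2_{i-n}}=2\!\left[\frac{1}{(x_{i}-x_{s})-i\mu_{i-n}}-\frac{1}{(x_{i}-x_{s})+i\mu_{i-n}}\right]$$
termwise, and then move all terms with a $+i$ in the denominator to one side of the equality, and all terms with a $-i$ to the other. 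The remaining difference $\omega'(x_{i}-i\mu_{i-n})/\omega(x_{i}-i\mu_{i-n})-\omega'(x_{i}+i\mu_{i-n})/\omega(x_{i}+i\mu_{i-n})$ in $(4.2)$ splits accordingly, producing exactly the two sides of $(4.12)$.

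There is essentially no obstacle here: all steps are algebraic regroupings of the identities already derived. The only thing to be careful about is the correct placement of the logarithmic derivatives of $\omega$ at the conjugate points $x_i\pm i\mu_{i-n}$ in $(4.12)$; I would simply verify term-by-term that the sign conventions from the partial-fraction identities match the sides on which these $\omega'/\omega$ terms appear after rearrangement. Since both $(4.1)$ and $(4.2)$ were established as consequences of the Pearson equation $(2.1)$ and the Stieltjes–Calogero relation $(2.9)$ applied at the zeros of $P_{n+m}(z)$, the hypothesis that $P_{n+m}$ has simple zeros (so that $(2.9)$ is valid) is precisely what is needed, and the corollary follows.
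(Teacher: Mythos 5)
Your proposal is correct and coincides with the paper's own derivation: the corollary is obtained exactly by substituting the displayed partial-fraction identities into $(4.1)$ and $(4.2)$ and regrouping the terms with $+i$ and $-i$ in the denominators onto opposite sides of the equality. Your observation that the simple-zero hypothesis is what validates the Stieltjes--Calogero relation $(2.9)$, and hence $(4.1)$ and $(4.2)$, matches the paper's setup as well.
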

In particular,  notice that
$$
\frac{\omega'(x_{i}+i\mu_{i-n})}{\omega(x_{i}+i\mu_{i-n})}=\frac{\hat{\omega}'(x_{i}+i\mu_{i-n})}{\hat{\omega}(x_{i}+i\mu_{i-n})}+\frac{p'(x_{i}+i\mu_{i-n})}{p(x_{i}+i\mu_{i-n})}=-2S_{1,i},
$$
it follows that the left hand side of $(4.13)$ is $0$, consequently implies
$$
\frac{\omega'(x_{i}-i\mu_{i-n})}{\omega(x_{i}-i\mu_{i-n})}
+\sum^{m}_{l=1,l\neq i-n}\frac{2}{(x_{i}-x_{n+l})-i(\mu_{i-n}-\mu_{l})}
+\sum^{n}_{s=1}\frac{2}{(x_{i}-x_{s})-i\mu_{i-n}}=0, \quad n+1\leq i\leq n+m.
$$

\section*{Acknowledgements}
\par
The author thanks to Satoshi Tsujimoto for fruitful discussions and helpful advices. This research did not receive any specific grant from funding agencies in the public, commercial, or not-for-profit sectors.






\end{document}